\theoremstyle{plain}
\newtheorem{thm}{Theorem}[section]
 \newtheorem{lem}[thm]{Lemma}
 \newtheorem{prop}[thm]{Proposition}
\theoremstyle{definition}
 \newtheorem{fact}[thm]{Fact}
 \theoremstyle{remark}
 \newtheorem{ex}[thm]{Example}
\numberwithin{equation}{section}
\numberwithin{figure}{section}
\newcommand{\Ker}{\operatorname{Ker}}
\newcommand{\image}{\operatorname{Im}}
\newcommand{\sgn}{\operatorname{sgn}}
\begin{document}
\title[GEOMETRY OF CURVES ON WHITNEY UMBRELLA]
{Geometry of curves passing through Whitney umbrella}
\author{Hiroyuki Hayashi}
\address{Department of Mathematics, Graduate School of Science, Kobe University, 1-1, Rokkodai, Nada-ku, Kobe 657-8501, Japan}
\email{224s017s@stu.kobe-u.ac.jp}
\keywords{Singularity, Curvature, Whitney umbrella, Darboux frame, Developable surface}
\subjclass[2020]{57R45, 53A05}
\maketitle
%%%%%%%%%%%%%%%%%%%%%%%%%%%%%%%%%%%%%%

%%%%%%%%%%%%%start documents%%%%%%%%%%%%%%%%%
%% Abstract
\begin{abstract}
We study geometry {of} curves passing through the  Whitney umbrella {singularities} {by} using a Darboux frame. We define three functions using {a} Frenet-Serret type formula related to the geodesic curvature, the normal curvature, and the geodesic torsion. We investigate the degrees of divergence, the top-terms of these functions, and their geometric meanings. We also consider a developable surface along the curve.
\end{abstract}

%% main text
\section{Introduction}%%%%%%%%%%%%%%%%%%%%%%%%%%%%%%%%%%%%%%%%%%%%%%%%%%%%%%%%%%%%%%%%
Let $f:(\mathbb{R}^2,0)\to(\mathbb{R}^3,0)$ be a smooth map-germ that locally defines a surface, possibly with singularities, in $\mathbb{R}^3$. Two map-germs $f , g : (\mathbb{R}^{n_1} ,0) \to (\mathbb{R}^{n_2} , 0)$ are said to be $\mathcal{A}$-{\it equivalent} if there exist diffeomorphism-germs $\psi : (\mathbb{R}^{n_1} ,0) \to (\mathbb{R}^{n_1} ,0)$ and $\phi : (\mathbb{R}^{n_2} ,0) \to (\mathbb{R}^{n_2} ,0)$ such that $g = \phi \circ f \circ \psi ^{-1}$ holds. A map-germ $f: (\mathbb{R}^2 ,0) \to (\mathbb{R}^3 ,0)$ is called a {\it Whitney umbrella} (also known as a {\it cross-cap}) if $f$ is $\mathcal{A}$-equivalent to the map-germ:
\[
(u,v) \mapsto (u, uv, v^2)^T
\]
 at the origin, where $(\ )^T$ is the transpose.  The singular point $(0,0)$ of a Whitney umbrella is called a {\it Whitney umbrella singularity}. Whitney umbrella singularities most frequently appear on surfaces in $\mathbb{R}^3$. To study differential geometry on surfaces, unit normal vector fields play a central role. However, it is known that the unit normal vector field around a Whitney umbrella singularity cannot be smoothly extended. By applying a blowing-up, the unit normal vector field around the Whitney umbrella singularity may be extended beyond the singular point \cite{FH2012}. This fact strongly suggests that the normal vector field along a curve passing through a Whitney umbrella singular point can be smoothly extended beyond the singular point. One can take a Darboux frame along such a curve.

In this paper, we study geometry {of} curves passing through the Whitney umbrella singularity using a Darboux frame that consists of smoothly defined unit normal and tangent vectors along the curve. By using Frenet-Serret type formula, we define three functions. They are related to the geodesic curvature, the normal curvature, and the geodesic torsion, all of which are defined on a set of regular points. These curvatures generally diverge at singularities on surfaces. To investigate the properties of such curves and the geometric relationship between the Whitney umbrella and the curve, we examine the degrees and the top-terms of these functions. We give geometric meanings of the vanishings of the top-terms of the functions. Moreover, we consider a developable surface along the curve passing through the Whitney umbrella singularity, and we give degrees of divergence and top-terms of the invariants. Developable surfaces are classified into cylinders, cones, and tangent developable surfaces, as well as their gluings. We introduce pseudo-cylindrical developable surface and pseudo-conical developable surfaces. These classes have made them easier to handle developable surfaces that are neither cylinders nor cones.

\section{Preliminaries}%%%%%%%%%%%%%%%%準備%%%%%%%%%%%%%%%%%%%%%%%%%%%%%%%%%%%%%%%%%%%%%%%%%
{We recall the following fact} \cite{W1995}.
\begin{fact}\label{Fact}%%%%%%%%%%%%%%%%%%%%%%%%%%%%%%%%
Let $W : (\mathbb{R}^2 ,0) \to (\mathbb{R}^3 ,0)$ be a Whitney umbrella. Then for any $k\geq3$, there exist an orientation preserving diffeomorphism $\psi :(\mathbb{R}^2 ,0) \to (\mathbb{R}^2 ,0)$ and a rotation $T\in SO(3)$ such that \begin{align}\label{normalWU}
T \circ W \circ \psi (u,v)=
\left(
\begin{array}{c}
	u \\
	uv +B(u) + O(u,v)^{k+1}\\
	A(u,v) + O(u,v)^{k+1}
\end{array}
\right),
\end{align}
where 
\[
B(u)=\displaystyle\sum_{i=3}^{k}\frac{b_i}{i!}v^i,\ 
A(u,v) =\displaystyle\sum_{m=2}^k \sum_{i+j=m}\frac{a_{i j}}{i! j!}u^i v^j,\ 
b_i\in\mathbb{R} ,\ a_{i j}\in\mathbb{R}, \ a_{02} \neq 0,
\]
$O(u,v)^{k}$ is the terms whose degrees are greater than or equal to $k$.
\end{fact}%%%%%%%%%%%%%%%%%%%%%%%%%%%%%%%%%%%%%%%%%%%%%%
The right-hand side of \eqref{normalWU} is called a {\it Bruce-West normal form} \cite{BW1998}. There are several studies of geometry {of} Whitney {umbrellas} using this form, see \cite{BW1998,FH2012,HHNUY2014,MN2015,T2007,W1995}, for example.

Let $c:(\mathbb{R},0)\to(\mathbb{R}^2,0)$ be a curve. The curve $c$ is said to be of {\it $m$-th multiplicity} if
\[
c(0)=c'(0)=\cdots=c^{(m-1)}(0)=0, c^{(m)}(0)\ne0.
\]
If there exists $m$ such that $c$ is of $m$-th multiplicity, then $c$ is said to be of {\it finite multiplicity}. We assume that $c$ is of finite multiplicity. Then there exists $m$ such that
\begin{equation}\label{c}
c(x) =(c_1(x),c_2(x))x^m, \quad (c_1(0),c_2(0)) \neq(0,0)
\end{equation}
holds.
Let $W:(\mathbb{R}^2,0)\to (\mathbb{R}^3 ,0)$ be a Whitney umbrella and $c_w:(\mathbb{R},0)\to (\mathbb{R}^2,0)$ be a curve of finite multiplicity. We define $\gamma=W\circ c_w$. We can smoothly extend a unit normal vector field of $W$ along $\gamma$  across the Whitney umbrella singularity as follows.

\begin{prop}%%%%%%%%%%%%%%%%%%%%%%%%%%%%
\label{propositionN}
Let $c_w : (\mathbb{R},0) \to (\mathbb{R}^2 ,0)$ be a curve of finite multiplicity, and let $W:(\mathbb{R}^2 ,0) \to (\mathbb{R}^3 ,0)$ be a Whitney umbrella. Then a unit normal vector field $\bm{n}$ of $W$ along $\gamma$ can be smoothly extended across the origin
\end{prop}%%%%%%%%%%%%%%%%%%%%%%%%%%%%%

\begin{proof}%==============================================
We take $W(u,v)$ and $c_w(x)$ given by \eqref{normalWU} and \eqref{c} respectively. Then we have
\begin{align*}
&W_u \times W_v(u,v) = \\
&\left(
\begin{array}{c}
	vA_v(u,v) - uA_u(u,v)- A_u(u,v)B_u(u)+O(u,v)^k \\
	-A_v(u,v) +O(u,v)^k \\
	u +B_u(u) +O(u,v)^k
\end{array}
\right),
\end{align*}
where $( )_u = \partial /\partial u , ( )_v = \partial /\partial v$. Thus we have
\begin{align*}
&(W_u \times W_v) \circ c_w (x) \\
&=\left(
\begin{array}{c}
	(c_1(x)\tilde{A}_u(x)-c_2(x)\tilde{A}_v(x))x^{2m}
		-\tilde{A}_u(x)\tilde{B}_u(x) x^{3m}+O(x)^{mk} \\
	-\tilde{A}_v(x)x^m +O(x)^{mk} \\
	c_1(x)x^m +\tilde{B}_u(x)x^{2m} +O(x)^{mk}
\end{array}
\right) \\
&=
\left(
\begin{array}{c}
	0+O(x)^{2m} \\
	-(a_{11}c_1(x) + a_{02}c_2(x))x^m +O(x)^{2m} \\
	c_1(x)x^m +O(x)^{2m}
\end{array}
\right),
\end{align*}
where 
\begin{align*}
&\tilde{B}_u(x)
=B_u(c_w(x))/x^{2m}
=b_2 c_1(x)^2+O(x)^{m},\\
&\tilde{A}_u(x)
=A_u(c_w(x))/x^m
= (a_{20}c_1(x) + a_{11}c_2(x)) +O(x)^{m},\\
&\tilde{A}_v(x)
=A_v(c_w(x))/x^m
= (a_{11}c_1(x) + a_{02}c_2(x)) +O(x)^{m}.
\end{align*}
We set
\begin{equation}\label{defn}
\mathcal{N}(x) = \dfrac{1}{x^m}(W_u \times W_v) \circ c_w (x).
\end{equation}
Since $a_{02} \neq 0 , (c_1(0) ,c_2(0)) \neq (0,0)$, we have
\[
\mathcal{N}(0) =(0, -(a_{11}c_1(0) + a_{02}c_2(0)), c_1(0))^T \neq (0,0,0)^T. 
\]
Therefore we can take a unit vector field
\begin{equation}\label{n}
\bm{n} (x) = \frac{\mathcal{N}(x)}{|\mathcal{N}(x)|}
\end{equation}
along $\gamma$.
\end{proof}%================================================
Similarly, a unit tangent vector field of $W$ along $\gamma$ can also be smoothly extended across the Whitney umbrella singularity.

\begin{prop}%%%%%%%%%%%%%%%%%%%%%%%%%%%%
Let $c_w : (\mathbb{R},0) \to (\mathbb{R}^2 ,0) $ be a curve of finite multiplicity, and let $W:(\mathbb{R}^2 ,0) \to (\mathbb{R}^3 ,0)$ be a Whitney umbrella. Then a unit tangent vector field of $W$ along $\gamma$ can be smoothly extended across the origin.
\label{propositionE}
\end{prop}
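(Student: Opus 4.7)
The plan is to follow the same strategy as Proposition~\ref{propositionN}: using the Bruce--West normal form, identify the order of vanishing of $(W\circ c_w)'(x)$ at $x=0$, divide by that power of $x$ to obtain a smooth nonvanishing vector field, and then normalize. Because pre-composition with a source diffeomorphism $\psi$ and post-composition with a rotation $T\in SO(3)$ send smooth unit tangent fields to smooth unit tangent fields (up to reparametrization and to a rotation of the image), Theorem~\ref{Fact} lets me assume that $W$ has the normal form \eqref{normalWU}.

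Write $c_w(x)=(c_1(x),c_2(x))x^m$ with $(c_1(0),c_2(0))\neq(0,0)$. By the chain rule each component of $(W\circ c_w)'(x)$ is a smooth function of $x$, whose leading order I can read off from $c_1,c_2$ and the coefficients of $W$, completely analogously to the computation carried out in the proof of Proposition~\ref{propositionN}. I then split into two cases.

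If $c_1(0)\neq 0$, the first component $(c_1(x)x^m)'=m\,c_1(0)x^{m-1}+O(x^m)$ has order exactly $m-1$, while the second and third components have order $\geq 2m-1$; hence $(W\circ c_w)'(x)/x^{m-1}$ extends smoothly to $x=0$ with nonzero first entry. If $c_1(0)=0$, so that $c_2(0)\neq 0$, write $c_1(x)=x^r\tilde c_1(x)$ with $\tilde c_1(0)\neq 0$ (or $c_1\equiv 0$). Then the first component of $(W\circ c_w)'(x)$ has order $m+r-1$, while the third component has leading term $m\,a_{02}\,c_2(0)^2\,x^{2m-1}$, nonzero since $a_{02}\neq 0$. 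Setting $\ell=\min(m+r-1,\,2m-1)$, the quotient $(W\circ c_w)'(x)/x^{\ell}$ extends smoothly to a nonzero vector at $x=0$: when $r<m$ the first entry dominates; when $r>m$ the third entry dominates; and when $r=m$ these contributions lie in distinct coordinates and so cannot cancel.

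In all cases, writing $\mathbf{E}(x)=(W\circ c_w)'(x)/x^{\ell}$ and $\mathbf{e}(x)=\mathbf{E}(x)/|\mathbf{E}(x)|$ produces a smooth unit vector field along $W\circ c_w$ extending the unit tangent (up to a sign on $\{x<0\}$ if $\ell$ is odd) from the regular locus. The main technical point is the bookkeeping in the second case: one must confirm that the Whitney umbrella nondegeneracy $a_{02}\neq 0$, combined with $(c_1(0),c_2(0))\neq(0,0)$, really does exclude simultaneous cancellation of the leading terms at every possible order of vanishing of $c_1$.
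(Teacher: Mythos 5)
Your proposal is correct and follows essentially the same route as the paper's proof: reduce to the Bruce--West normal form, read off the order of vanishing of $(W\circ c_w)'$ from the order $r$ of $c_1$ at $0$, divide by $x^{\min(m+r-1,\,2m-1)}$, and use $a_{02}\neq 0$ together with $(c_1(0),c_2(0))\neq(0,0)$ to see the resulting vector is nonzero before normalizing. The paper merely organizes the same argument into four explicit cases ($c_1(0)\neq0$; $0<r<m$; $r=m$; $r>m$), and your observation that in the case $r=m$ the two leading contributions sit in distinct coordinates and cannot cancel is exactly the point that makes the paper's combined treatment of its cases (3) and (4) work.
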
%%%%%%%%%%%%%%%%%%%%%%%%%%%%%

\begin{proof}%==============================================
We take $W(u,v)$ and $c_w(x)$ given by \eqref{normalWU} and \eqref{c} respectively. Then, we have
\[
\gamma(x)=W \circ c_w(x) =
\left(
\begin{array}{c}
	c_1(x)x^m \\
	c_1(x)c_2(x)x^{2m}+B(c_w(x))+O(x)^{m(k+1)} \\
	A(c_w(x)) +O(x)^{m(k+1)}
\end{array}
\right).
\]
Differentiating $W \circ c_w (x)$, we have
\begin{align*}
&\gamma'(x) \\
&=\left(
\begin{array}{c}
	mc_1(x)x^{m-1} +{c_1}'(x)x^m \\
	2mc_1(x)c_2(x)x^{2m-1} +O(x)^{2m} \\
	m(a_{20}c_1(x)^2 + 2a_{11}c_1(x)c_2(x)
		+ a_{02}c_2(x)^2)x^{2m-1}+O(x)^{2m}
\end{array}
\right), 
\end{align*}
where $ ' = \partial / \partial x $. We show the proposition {by considering} the following cases.

\begin{enumerate}
	\item[(1)] $c_1(0) \neq 0$,
	\item[(2)] $c_1(0) = {c_1}'(0) = \cdots
		={c_1}^{(l-1)}(0) =0 ,{c_1}^{(l)}(0) \neq 0 ,\ 1 \le l < m$,
	\item[(3)] $c_1(0) = {c_1}'(0) = \cdots
		={c_1}^{(m-1)}(0) =0, {c_1}^{(m)}(0) \neq 0$,
	\item[(4)] $c_1(0) = {c_1}'(0) = \cdots ={c_1}^{(m)}(0) =0$.
\end{enumerate}
If $c_w$ satisfies $(1)$, then we have a map $\mathcal{E}_1 : (\mathbb{R},0) \to (\mathbb{R}^3,0)$ such that
\[
\gamma'(x) =
\mathcal{E}_1(x)x^{m-1}, \quad
\mathcal{E}_1(0) \neq 0,
\]
where 
\[
\mathcal{E}_1(x)=
\left(
\begin{array}{c}
	mc_1(x) +{c_1}'(x)x \\
	2mc_1(x)c_2(x)x^{m} +O(x)^{m+1} \\
	m(a_{20}c_1(x)^2 + 2a_{11}c_1(x)c_2(x)
		+ a_{02}c_2(x)^2)x^{m}+O(x)^{m+1}
\end{array}
\right).
\]
If $c_w$ satisfies $(2)$, then it holds that $c_2(0) \neq 0$, and we set $\bar{c}_1(x)$ satisfying that $c_1(x) =\bar{c}_1(x)x^l , \bar{c}_1(0) \neq 0$. Then we have a map $\mathcal{E}_2 : (\mathbb{R},0) \to (\mathbb{R}^3,0)$ such that
\[
\gamma'(x) =\mathcal{E}_2(x)x^{m+l-1}, \quad \mathcal{E}_2(0) \neq 0,
\]
where
\[
\mathcal{E}_2(x)=
\left(
\begin{array}{c}
	(m+l)\bar{c}_1(x) +{\bar{c}_1}'(x)x \\
	(2m+l)\bar{c}_1(x)c_2(x)x^m +O(x)^{m+1} \\
	ma_{02}c_2(x)^2x^{m-l}+O(x)^{m-l+1}
\end{array}
\right).
\]
If $c_w$ satisfies $(3)$ or $(4)$, then it holds that $c_2(0) \neq 0$, and {we} set $\tilde{c}_1(x)$ satisfying that $c_1(x) = \tilde{c}_1(x) x^m$. Then we have a map $\mathcal{E}_3 : (\mathbb{R},0) \to (\mathbb{R}^3,0)$ such that
\[
\gamma'(x) =\mathcal{E}_3(x)x^{2m-1}, \quad \mathcal{E}_3(0) \neq 0,
\]
where
\[
\mathcal{E}_3(x)=
\left(
\begin{array}{c}
	2m\tilde{c}_1(x) +{\tilde{c}_1}'(x)x \\
	3m\tilde{c}_1(x)c_2(x)x^m+O(x)^{m+1} \\
	m a_{02}c_2(x)^2+O(x)^{1}
\end{array}
\right).
\]
Therefore we can take the unit vector field:
\begin{align}\label{e}
\bm{e}(x) =
\begin{cases}
	\mathcal{E}_1(x)/|\mathcal{E}_1(x)|, \quad \text{for} \ (1), \\
	\mathcal{E}_2(x)/|\mathcal{E}_2(x)|, \quad \text{for} \ (2), \\
	\mathcal{E}_3(x)/|\mathcal{E}_3(x)|,
	\quad \text{for} \ (3) \ \text{and} \ (4)
\end{cases}
\end{align}
along $\gamma$.
\end{proof}%================================================

At $x=0$, the unit tangent vector $\bm{e}$ of $W$ satisfies that
\[
\bm{e}(0) =
\begin{cases}
	(c_1(0), 0, 0)^T/|c_1(0)|, \quad &\text{if} \ (1)\ \text{holds}, \\
	(\bar{c}_1(0), 0, 0)^T/|\bar{c}_1(0)|,\quad &\text{if} \ (2)\ \text{holds}, \\
	(\tilde{c}_1(0), 0, a_{02}c_2(0))^T/
	\sqrt{\tilde{c}_1(0)^2+{a_{02}}^2c_2(0)^2},\quad &\text{if} \ (3)\ \text{holds}, \\
	(0, 0, a_{02}c_2(0))^T/|a_{02}c_2(0)|,\quad &\text{if} \ (4)\ \text{holds}.
\end{cases}
\]
The tangent plane of the Whitney umbrella at the origin degenerates into a line. We call this line a {\it tangent line} {of the Whitney umbrella}. There exists non-zero vector $\eta \in T_0 \mathbb{R}^2$ such that $dW_0(\eta)=0$. We call $\eta$ a {\it null vector} (cf. \cite{KRSUY2005}). 
The plane spanned by the tangent line and $\eta\eta W(0)$ is called the {\it principal plane}, where $\eta\eta W$ is the twice directional derivative of $W$ with respect to $\eta$. Let $L_t:(\mathbb{R},0) \to \mathbb{R}^3$ be the tangent line of $W$. The line $L_i:(\mathbb{R},0) \to \mathbb{R}^3$ is called the {\it principal intersection line} of $W$ if the line is the intersection of the principal plane and the normal plane {(see Figure \ref{plane})}. 

\begin{ex}
We set $W_1(u,v)=(u,v^2,uv)^T$ which is a Whitney umbrella. Then we obtain $L_t(u) = (u,0,0)^T, L_i(u)=(0,u,0)^T$. 

We set
\[
c_{w1}(x)=\left(x^2,\frac{x^5}{5}\right),\quad c_{w2}(x)=\left(x^2,x\right),\quad c_{w3}(x)=\left(\frac{x^5}{5},x^2\right).
\]
We consider the curves $\gamma_1(x)=W_1\circ c_{w1}(x)$, $\gamma_2(x)=W_1\circ c_{w2}(x)$, and $\gamma_3(x)=W_1\circ c_{w3}(x)$. Then the curve $\gamma_1$ (respectively, $\gamma_2$, $\gamma_3$) satisfies the condition $(1)$ (respectively, $(3)$, $(4)$).
The curves $\gamma_1$, $\gamma_2$, and $\gamma_3$ are tangent to {the} principal plane at $x=0$ ({see} Figure \ref{cur}). In particular, $\gamma_1$ is tangent to the tangent line of $W_1$ and $\gamma_3$ is tangent to the principal intersection line of $W_1$.
We remark that the direction of the unit tangent vector changes between (2) and (4) via (3).
\end{ex}

%%%%%%%%%%%%%%%%%%%%%%%%%%%%%%%%%%%%%%%%%%%%%%%%%%%%%%%%%%%%%%%%%%%%%%%%%%
\begin{figure}[ht]
\centering
\includegraphics[width=0.8\textwidth]{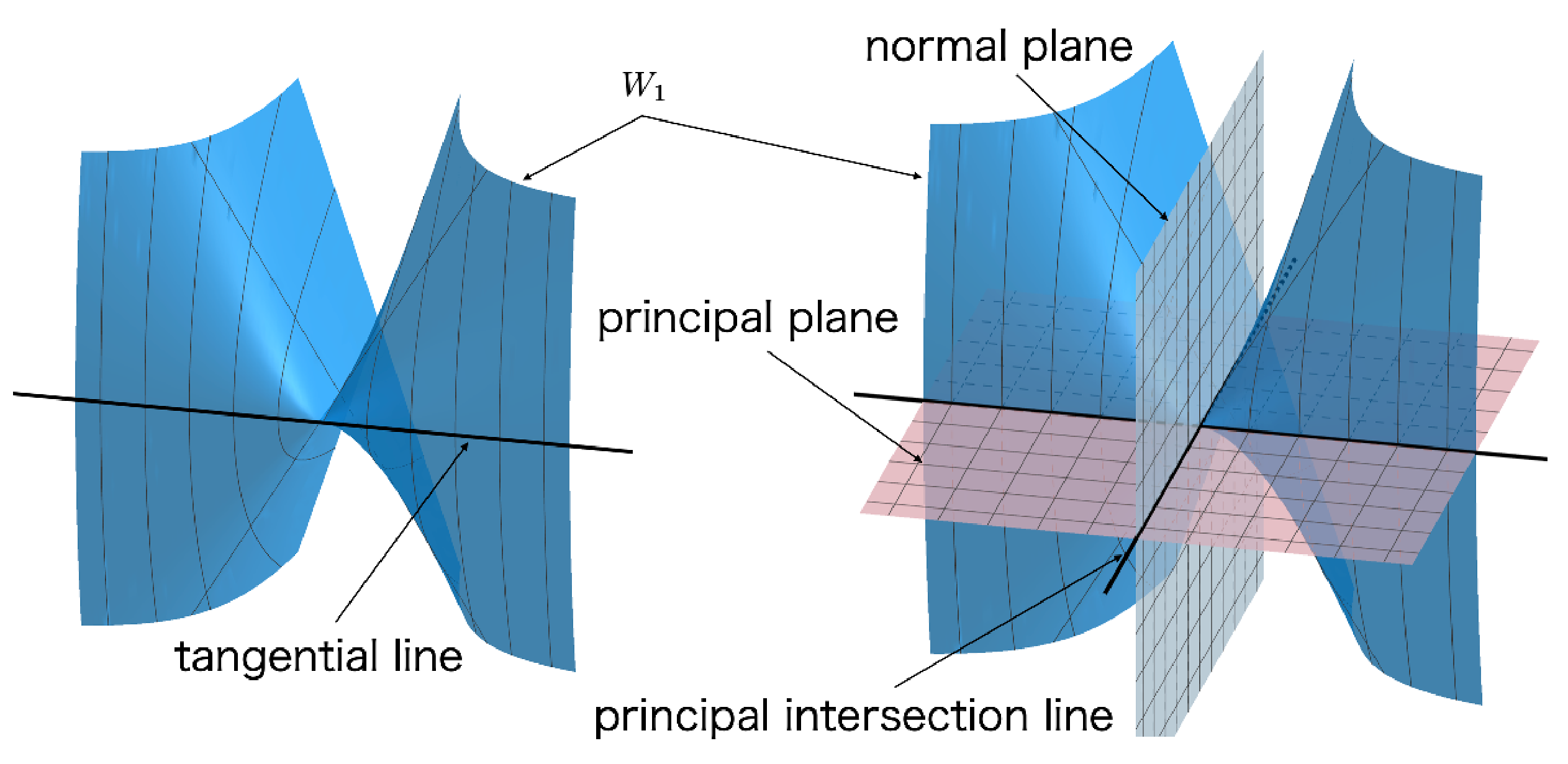}
\caption{The tangent line, principal plane, normal plane, and principal intersection line of $W_1$}
\label{plane}
\end{figure}
%%%%%%%%%%%%%%%%%%%%%%%%%%%%%%%%%%%%%%%%%%%%%%%%%%%%%%%%%%%%%%%%%%%%%%%%%%

%%%%%%%%%%%%%%%%%%%%%%%%%%%%%%%%%%%%%%%%%%%%%%%%%%%%%%%%%%%%%%%%%%%%%%%%%%
\begin{figure}[ht]
\centering
\includegraphics[width=0.8\textwidth]{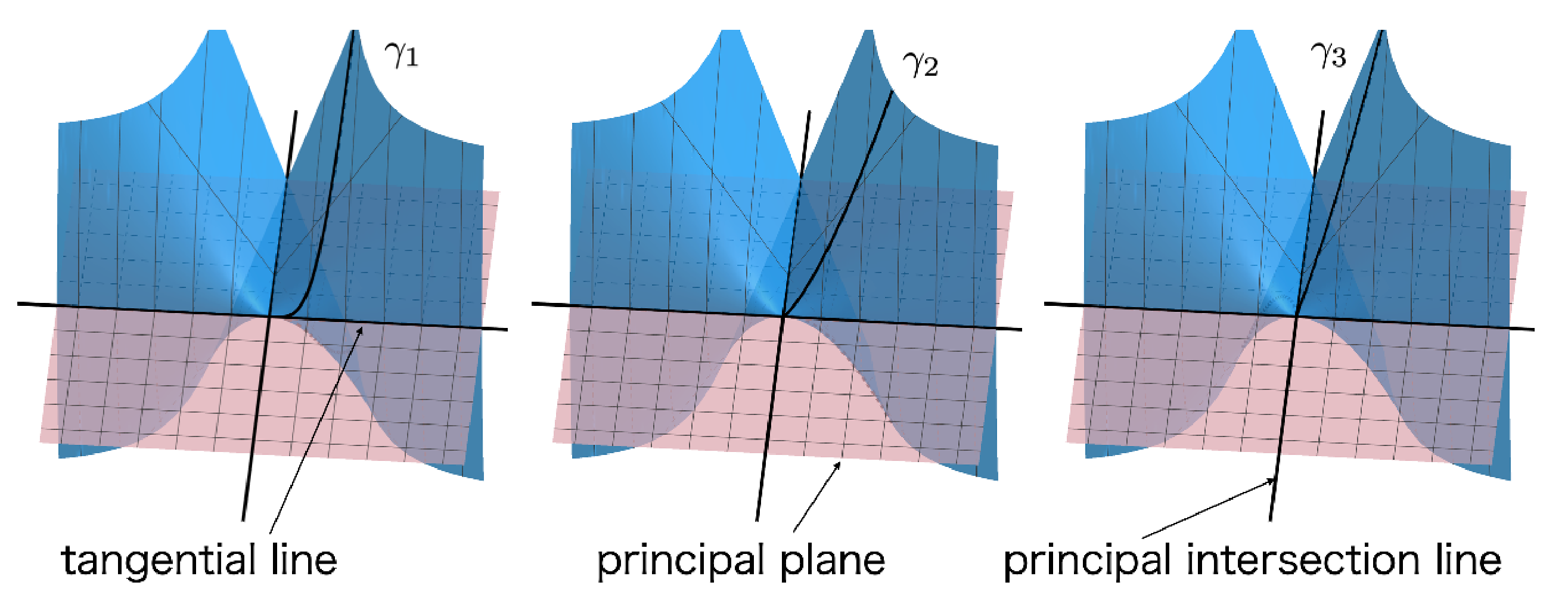}
\caption{The curves $\gamma_1$ (left), $\gamma_2$ (center), and $\gamma_3$ (right)}
\label{cur}
\end{figure}
%%%%%%%%%%%%%%%%%%%%%%%%%%%%%%%%%%%%%%%%%%%%%%%%%%%%%%%%%%%%%%%%%%%%%%%%%%

\section{Geometry {of} curves passing through Whitney umbrella}%%%%%%%%%%%%%%%%%%%%%%%%%%%%%%%%%%%%%%%%%%
\subsection{Darboux frame and Curvatures}%%%%%%%%%%%%%%%%%%%%%%%%%%%%%%%%%%%%%%%%%%%%%%%%%%%%%
Let $W$ and $c_w$ be \eqref{normalWU} and \eqref{c} respectively. Using $(\ref{n})$ and $(\ref{e})$, we obtain $\bm{n}(x)$ and $\bm{e}(x)$
which are the normal vector and the tangent vectors of $W$ along $\gamma$. We define $\bm{b}(x) := \bm{n}(x) \times \bm{e}(x)$. Then we obtain a Darboux frame 
\begin{equation}\label{ebn}
\{ \bm{e}(x) ,\bm{b}(x) ,\bm{n}(x) \}
\end{equation}
along $\gamma$. Using this frame, we define functions $\kappa_1 , \kappa_2 , \kappa_3 : (\mathbb{R} ,0) \to \mathbb{R}$ by the following Frenet-Serret type formula:
{
\begin{align}\label{frenet}
\left(
	\bm{e}'(x), 
	\bm{b}'(x), 
	\bm{n}'(x)
\right)^T
=
\left(
\begin{array}{ccc}
	0 & \kappa _1 (x) & \kappa _2 (x) \\
	-\kappa _1 (x) & 0 & \kappa _3 (x) \\
	-\kappa _2 (x) & -\kappa _3 (x) & 0
\end{array}
\right)
\left(
	\bm{e} (x),
	\bm{b} (x),
	\bm{n} (x)
\right)^T.
\end{align}
}
Note that the functions $\kappa_1, \kappa_2$, and $\kappa_3$ depend on the parameter $x$. We have the following relations between $\kappa_1, \kappa_2, \kappa_3$ and the geodesic curvature $\kappa_g$, the normal curvature $\kappa_{\nu}$, and the geodesic torsion $\kappa_t$, which are defined on the set of regular points of $W$. By Propositions $\ref{propositionN}$ and $\ref{propositionE}$, there exist $\alpha$, $\beta$ {so that} we can set $\mathcal{E}$ and $\mathcal{N}$ by
\[
\gamma '(x) = \mathcal{E}(x) x^{\alpha} ,\mathcal{E}(0) \neq 0,
\]
\[
(W_u \times W_v) \circ c_w (x) =
\mathcal{N}(x) x^{\beta} ,\mathcal{N}(0) \neq 0.
\]
\begin{lem}\label{relation-k}%%%%%%%%%%%%%%%%%%%%%%
Under the above assumptions on $\alpha,\beta$, $\mathcal{E}$ and $\mathcal{N}$, the following relations hold
\begin{flalign}
\label{k1kg}
&\kappa_g(x) =
\sgn{(x^{\alpha +\beta})}\frac{\kappa_1(x)}{|\mathcal{E}(x)|x^{\alpha}}, \\
\label{k2kn}
&\kappa_{\nu}(x) =
\sgn{(x^{\beta})}\frac{\kappa_2(x)}{|\mathcal{E}(x)|x^{\alpha}}, \\
\label{k3kt}
&\kappa_t(x) =
\frac{\kappa_3(x)}{|\mathcal{E}(x)|x^{\alpha}}.
\end{flalign}
\end{lem}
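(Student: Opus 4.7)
The plan is to reduce the lemma to a direct computation comparing the smoothly extended frame $\{\mathbf{e},\mathbf{b},\mathbf{n}\}$ with the classical Darboux frame that is available on the regular part of $W\circ c_w$, and then to extract the Frenet--Serre coefficients via inner products.

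First I would identify the two frames on regular points. Because $(W\circ c_w)'(x)=\mathcal{E}(x)x^{\alpha}$ with $\mathcal{E}(0)\neq0$, the arc-length speed is $|(W\circ c_w)'(x)|=|\mathcal{E}(x)|\,|x|^{\alpha}$, so the classical arc-length unit tangent $\mathbf{T}$ satisfies
\[
\mathbf{T}(x)=\frac{\mathcal{E}(x)x^{\alpha}}{|\mathcal{E}(x)|\,|x|^{\alpha}}=\sgn(x^{\alpha})\,\mathbf{e}(x).
\]
Similarly, $(W_u\times W_v)\circ c_w(x)=\mathcal{N}(x)x^{\beta}$ with $\mathcal{N}(0)\neq0$ gives the classical unit surface normal $\mathbf{N}(x)=\sgn(x^{\beta})\,\mathbf{n}(x)$, hence the conormal $\mathbf{U}=\mathbf{N}\times\mathbf{T}=\sgn(x^{\alpha+\beta})\,\mathbf{b}(x)$. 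These sign factors are locally constant on each side of $x=0$, so they vanish under differentiation.

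Second, using $\dfrac{dx}{ds}=\dfrac{1}{|\mathcal{E}(x)|\,|x|^{\alpha}}$ and the Frenet--Serre type formula \eqref{frenet}, I would compute
\[
\frac{d\mathbf{T}}{ds}=\frac{\sgn(x^{\alpha})}{|\mathcal{E}(x)|\,|x|^{\alpha}}\bigl(\kappa_1(x)\mathbf{b}(x)+\kappa_2(x)\mathbf{n}(x)\bigr),\qquad
\frac{d\mathbf{U}}{ds}=\frac{\sgn(x^{\alpha+\beta})}{|\mathcal{E}(x)|\,|x|^{\alpha}}\bigl(-\kappa_1(x)\mathbf{e}(x)+\kappa_3(x)\mathbf{n}(x)\bigr).
\]
Then the classical definitions $\kappa_g=\langle d\mathbf{T}/ds,\mathbf{U}\rangle$, $\kappa_\nu=\langle d\mathbf{T}/ds,\mathbf{N}\rangle$, $\kappa_t=\langle d\mathbf{U}/ds,\mathbf{N}\rangle$ immediately yield
\[
\kappa_g=\frac{\sgn(x^{2\alpha+\beta})}{|\mathcal{E}(x)|\,|x|^{\alpha}}\kappa_1,\quad
\kappa_\nu=\frac{\sgn(x^{\alpha+\beta})}{|\mathcal{E}(x)|\,|x|^{\alpha}}\kappa_2,\quad
\kappa_t=\frac{\sgn(x^{\alpha+2\beta})}{|\mathcal{E}(x)|\,|x|^{\alpha}}\kappa_3.
\]
Finally, using $\sgn(x^{2\alpha})=\sgn(x^{2\beta})=1$ and $|x|^{\alpha}=\sgn(x^{\alpha})\,x^{\alpha}$ to absorb one sign into the denominator, I would rewrite these three identities as \eqref{k1kg}, \eqref{k2kn}, \eqref{k3kt}.

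The computation itself is routine once the two frames are aligned; the main obstacle I anticipate is the bookkeeping of the sign factors $\sgn(x^{\alpha})$ and $\sgn(x^{\beta})$, in particular making sure that the reparametrization sign (from $\mathbf{T}$ versus $\mathbf{e}$) and the orientation sign (from $\mathbf{N}$ versus $\mathbf{n}$) are combined correctly so that $x^{\alpha}$, rather than $|x|^{\alpha}$, appears in the denominators of the stated formulas. Since $\sgn$ is locally constant on $\mathbb{R}\setminus\{0\}$, no boundary-term issue arises at regular points, which is the only place where $\kappa_g,\kappa_\nu,\kappa_t$ are defined.
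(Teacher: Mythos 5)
Your proposal is correct and follows essentially the same route as the paper: both identify the classical Darboux frame on the regular part with the smoothly extended frame $\{\mathbf{e},\mathbf{b},\mathbf{n}\}$ up to the factors $\sgn(x^{\alpha})$ and $\sgn(x^{\beta})$, and then read off the curvatures as inner products divided by the speed $|\mathcal{E}(x)|\,|x|^{\alpha}$, absorbing one sign to turn $|x|^{\alpha}$ into $x^{\alpha}$. The only cosmetic difference is that you differentiate the frame in arc length and invoke the Frenet--Serre type formula \eqref{frenet} directly, whereas the paper computes $\kappa_g$ from $\langle (W\circ c_w)'',\bar{\mathbf{b}}\rangle/|(W\circ c_w)'|^2$ and matches it against the explicit expression for $\kappa_1=\langle\mathbf{e}',\mathbf{b}\rangle$; the sign bookkeeping comes out identically.
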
%%%%%%%%%%%%%%%%%%%%%%%%%%%%%%%

\begin{proof}%==============================================
We set
\[
\bar{\bm{e}}(x) =
\frac{\mathcal{E}(x) x^{\alpha}}{|\mathcal{E}(x) x^{\alpha}|},\quad
\bar{\bm{b}}(x) =
\bar{\bm{n}}(x) \times \bar{\bm{e}}(x),\quad
\bar{\bm{n}}(x) =
\frac{\mathcal{N}(x) x^{\beta}}{|\mathcal{N}(x) x^{\beta}|}.
\]
The frame $\{\bar{\bm{e}},\bar{\bm{b}},\bar{\bm{n}}\}$ is a Darboux frame of the curve $\gamma$ at regular points. Note that $\{ \bar{\bm{e}}(x), \bar{\bm{b}}(x), \bar{\bm{n}}(x)\} $ are not defined at $x = 0$. In this case, the geodesic curvature is given by:
\[
\kappa_g(x) =
\frac{\langle \gamma''(x), \bar{\bm{b}}(x) \rangle}{|\gamma'(c)|^2} 
\]
where $\langle \ , \ \rangle$ is {the} inner product. A straightforward calculation shows that
\begin{align}\label{cal_kg}
\kappa_g(x) =
\frac{\sgn{(x^{2\alpha})}\sgn{(x^{\beta})}}{|\mathcal{E}(x)x^{\alpha}|}
\left\langle
\frac{\mathcal{E}'(x)}{|\mathcal{E}(x)|},
\frac{\mathcal{N}(x)}{|\mathcal{N}(x)|} \times
\frac{\mathcal{E}(x)}{|\mathcal{E}(x)|}
\right\rangle.
\end{align}
On the other hand, we obtain
\begin{align}\label{cal_k1}
\kappa_1 (x) =
\langle \bm{e}'(x), \bm{b}(x) \rangle =
\left\langle
\frac{\mathcal{E}'(x)}{|\mathcal{E}(x)|},
\frac{\mathcal{N}(x)}{|\mathcal{N}(x)|} \times
\frac{\mathcal{E}(x)}{|\mathcal{E}(x)|}
\right\rangle.
\end{align}
By equations $(\ref{cal_kg})$ and $(\ref{cal_k1})$, we obtain $(\ref{k1kg})$. By the same method, we obtain $(\ref{k2kn})$ and $(\ref{k3kt})$.
\end{proof}%===================================================================

\subsection{Degrees and top-terms of curvatures}%%%%%%%%%%%%%%%%%%%%%%%%%%%%%%%%%%%%%%%%%%%%%%%%%%
Let $W$ and $c_w$ be \eqref{normalWU} and \eqref{c} respectively. Then the limit vector
\begin{equation}\label{limc}
\lim_{x\to0}\frac{{c_w}'(x)}{|{c_w}'(x)|}
\end{equation}
is well-defined. If $c_2(0)=0$, then the limit vector does not generate  $\Ker dW_0=\{(u,v)\in(\mathbb{R}^2,0)|u=0\}$. Then $\gamma$ is tangent to the tangential line of $W$. Therefore, the local properties of $\gamma$ are the same as the tangential line of $W$.
On the other hand, if $c_1(0)=0$, then we see the limit vector \eqref{limc} generates $\Ker dW_0$. In this case, $\gamma$ may not be tangent to the tangential line of $W$.  Thus we assume that $c_1(0)=0$. By changing parameterization, we may assume
\begin{equation}\label{c_w}
c_w(x)=
\begin{cases}
\displaystyle\left(x^{mp+q}\sum_{i=0}c_{i}x^{i},x^m\right) &(\text{if}\ p\geq1,\ 1\le q<m),\\
\displaystyle\left(x^{mp}\sum_{i=0}c_{i}x^{i},x^m\right) &(\text{if}\ p\geq2,\ q=0),
\end{cases}
\end{equation}
where $c_{i}\in\mathbb{R}$. The coefficient $c_{0}$ is related to the bias of a cusp \cite{HS2019}.

If the degree of $\kappa_1$ (respectively, $\kappa_2$ or $\kappa_3$) with respect to $x$ is equal or greater than $\alpha$, then the curvature $\kappa_g$ (respectively, $\kappa_{\nu}$ or $\kappa_t$) can be smoothly extended across the singularity of $W$. We study degrees and top-terms of $\kappa_1,\ \kappa_2, \text{and} \ \kappa_3$.

We assume that $p\geq1 , 1 \le q < m$, and $c_w(x)$ satisfies $(\ref{c_w})$. By \eqref{frenet}, it holds that
\begin{align}
&\kappa _1 (x) =
\begin{cases}
	\tilde{\kappa} _1(x) x^{m-q-1} & (\text{if}\ p=1), \\
	\tilde{\kappa} _1(x) x^{m(p-2) +q -1} & (\text{if}\ p=2,3), \\
	\tilde{\kappa} _1(x) x^{2m-1} & (\text{if}\ p\geq4),
\end{cases}\nonumber \\
&\kappa _2 (x) = \tilde{\kappa} _2(x) x^{m-1} \nonumber \\
&\kappa _3 (x) =
\begin{cases}
	\tilde{\kappa} _3(x) x^{q-1} & (\text{if}\ p=1), \\
	\tilde{\kappa} _3(x) x^{m-1} & (\text{if}\ p\geq2 ) .
\end{cases}\nonumber 
\end{align}
where $\tilde{\kappa}_1, \tilde{\kappa}_2, \tilde{\kappa_3}:(\mathbb{R},0) \to \mathbb{R}$ are $C^{\infty}$-functions, and $\tilde{\kappa}_1(0)$ (respectively, $\tilde{\kappa}_2(0)$ or $\ \tilde{\kappa}_3(0))$ is the top-term of $\kappa_1$ (respectively, $\kappa_2$ or $\kappa_3$). Calculating $\tilde{\kappa}_1(0)$, $\tilde{\kappa}_2(0)$, and $\tilde{\kappa}_3(0)$, we have
\begin{align}
&\tilde{\kappa}_1(0) =
\frac{1}{|\mathcal{E}(0)|^2|\mathcal{N}(0)|} 
\begin{cases}
	m({m}^2-q^2){a_{02}}^2c_0 &(\text{if}\ p=1), \\
	-m\{ m(p-2)+q\} (mp+q){a_{02}}^2c_0 &(\text{if}\ p=2,3), \\
	-{m}^2{a_{02}}^2b_3 &(\text{if}\ p\geq4),
\end{cases} 
\label{k11}\\
&\tilde{\kappa}_2(0) =
\frac{1}{|\mathcal{E}(0)||\mathcal{N}(0)|} 
\begin{cases}
	-m(m+2q) a_{02}c_0 &(\text{if}\ p=1), \\
	-{m}^2a_{02}b_3 /2 &(\text{if}\ p\geq2),
\end{cases} 
\label{k21}\\
&\tilde{\kappa}_3(0) =
\frac{1}{|\mathcal{E}(0)||\mathcal{N}(0)|^2} 
\begin{cases}
	-q(mp+q)a_{02}c_0 &(\text{if}\ p=1), \\
	m^2{a_{02}}^3 &(\text{if}\ p\geq2).
\end{cases}
\label{k31}
\end{align}

We assume that $p\geq2, q=0$, and $c_w(x)$ satisfies $(\ref{c_w})$. By the same calculation {as} above, we have
\begin{align}
&\kappa _1 (x) =
\begin{cases}
	\tilde{\kappa} _1(x) x^{m-1} & (\text{if}\ p =1,2,3), \\
	\tilde{\kappa} _1(x) x^{2m-1} & (\text{if}\ p\geq 4),
\end{cases}\nonumber \\
&\kappa _2 (x) = \tilde{\kappa} _2(x) x^{m-1},\nonumber \\
&\kappa _3 (x) = \tilde{\kappa} _3(x) x^{m-1},\nonumber 
\end{align}
and
\begin{align}
&\tilde{\kappa}_1(0) =
\frac{1}{|\mathcal{E}(0)|^2|\mathcal{N}(0)|} 
\begin{cases}
	a_{02}(6a_{11}{c_0}^2+a_{03}c_0-3a_{02}c_1) &(\text{if}\ p=2,m=1), \\
	m^3 a_{02} c_0(6 a_{11} c_0 + a_{03}) &(\text{if}\ p=2, m\geq2), \\
	-3{m}^3{a_{02}}^2c_0 &(\text{if}\ p=3), \\
	-{m}^3{a_{02}}^2(8c_0 +b_3 /2) &(\text{if}\ p=4), \\
	{m}^3{a_{02}}^2b_3 /2 &(\text{if}\ p\geq 5),
\end{cases}
\label{k12} \\
&\tilde{\kappa}_2(0) =
\frac{1}{|\mathcal{E}(0)||\mathcal{N}(0)|} 
\begin{cases}
	-{m} ^2 a_{02} (3c_0 +b_3 /2 ) &(\text{if}\ p=2), \\
	-{m}^2(p-1)a_{02}b_3 /2 &(\text{if}\ p\geq3),
\end{cases} 
\label{k22}\\
&\tilde{\kappa}_3(0) =
\frac{1}{|\mathcal{E}(0)||\mathcal{N}(0)|^2} 
\begin{cases}
	-{m} ^2 a_{02}(2{c_0}^2+b_3c_0-{a_{02}}^2) &(\text{if}\ p=2), \\
	{m}^2{a_{02}}^3 &(\text{if}\ p\geq3).
\end{cases}
\label{k32}
\end{align}

{In} \eqref{k22} and \eqref{k32}, if $3c_0+b_3/2=0$, then it holds
\[
2c_0^2+b_3c_0-a_{02}^2=-4c_0^2-a_{02}^2\ne0.
\]
Then we have the following lemma.

\begin{lem}\label{k2k3}
We set $W$ and $c_w$ by \eqref{normalWU} and \eqref{c_w}. Then the pair $(\kappa_2, \kappa_3)$ is of finite multiplicity.
\end{lem}

\subsection{Differential geometric meanings of top-terms of curvatures}%%%%%%%%%%%%%%%%%%%%%%%%%%%%%%%%%%%%%%%
In this section, we study the geometric meaning of the vanishing of the top-terms of $\kappa_{1}$, $\kappa_2$, and $\kappa_3$. The coefficients $c_i$ are differential geometric invariants of $c_w$. The coefficients $a_{ij}$ and $b_i$ are differential geometric invariants of $W$.
For example, we observe the top-term \eqref{k12} of $\kappa_1$ in this case $p=2,m=1$. The top-term $\tilde\kappa_1(0)$ vanishes if and only if 
\begin{equation}\label{eqk1}
6a_{11}{c_0}^2+a_{03}c_0-3a_{02}c_1=0.
\end{equation}
If $\gamma=W\circ c_w$ satisfies the equation \eqref{eqk1}, then the degrees of divergence of $\kappa_1$ is greater than or equal to $1$. We remark that this property does not depend on the parameter $x$.
The equation \eqref{eqk1} is the relational expression between differential geometric invariants of $W$ and differential geometric invariants of $c_w$.
In other words, equation \eqref{eqk1} has a geometric meaning with respect to $W$ and $c_w$.

For \eqref{e}, if $(1)$ and $(2)$ hold, then the vector $\mathcal{E}(x)$ of $\gamma$ is tangent to the tangent line of the Whitney umbrella. In this case, vanishing of the top-terms of $\kappa_1, \kappa_2, \kappa_3$ is determined by the information of $\gamma$ itself. However, if $(3)$ and $(4)$ hold, then $\mathcal{E}(x)$ is not tangent to that of Whitney umbrella. 
We set $c_w$ as
\begin{align}\label{c_2m}
c_w(x) = (x^{2m}\sum_{i=0}c_ix^i, x^m),
\end{align}
where $c_i\in\mathbb{R}$. In this case, the contributions from the Whitney umbrella and from the curve are of the same order, and we study how both sets of information contribute to the functions. We obtain the top-term $\tilde{\kappa}_1(0)$ (respectively, $\tilde{\kappa}_2(0)$ or $\tilde{\kappa}_3(0)$) of $\kappa_1$ (respectively, $\kappa_2$ or $\kappa_3$):
\[
\tilde{\kappa}_1(0) =
\frac{m^3 a_{02}A}{|\mathcal{E}(x)|^2|\mathcal{N}(x)|},\ 
\tilde{\kappa}_2(0) =
-\frac{{m} ^2 a_{02}B}{|\mathcal{E}(x)||\mathcal{N}(x)|},\ 
\tilde{\kappa}_3(0) =
-\frac{{m} ^2 a_{02}C}{|\mathcal{E}(x)||\mathcal{N}(x)|^2},
\]
where
\begin{equation}\label{rela_k1}
A = 
\begin{cases}
6a_{11} c_0 ^2 +a_{03}c_0 -3a_{02} c_1\ &(\text{if}\ m=1),\\
6a_{11}c_0+a_{03} &(\text{if}\ m\geq 2),
\end{cases}\
B = 3c_0 +\frac{b_3}{2},\
C = 2c_0 ^2 + b_3 c_0 - a_{02} ^2,
\end{equation}
are top-term respectively.

{We set} $W$ and $c_w$ by \eqref{normalWU} and \eqref{c_2m}. Using the Darboux frame \eqref{ebn} at $x=0$, it holds that
\begin{equation}\label{ebn0}
\bm{e}(0) = \frac{(2c_0, 0, a_{02})^T}{\sqrt{4{c_0}^2+{a_{02}}^2}},\ 
\bm{b}(0) = \frac{(-a_{02}, 0, 2c_0)^T}{\sqrt{4{c_0}^2+{a_{02}}^2}},\ 
\bm{n}(0) =\frac{(0, -a_{02}, 0)^T}{|a_{02}|}.
\end{equation}
Let $\pi :\mathbb{R}^3 \to \Pi$ be the projection onto the vector $\bm{e}(0)$, where $\Pi\subset\mathbb{R}^3$ is the plane spanned by $\bm{b}(0)$ and $\bm{n}(0)$. Then we obtain the following theorem.

\begin{thm}%%%%%%%%%%%%%%%%%%%%%%%%%%%%
We assume that $(A,B)\ne(0,0)$. Then the projection $\pi \circ \gamma$ is tangent to $\bm{n}(0)$ at $x=0$ if and only if $A=0$.
The projection $\pi \circ \gamma$ is tangent to $\bm{b}(0)$ at $x=0$ if and only if $B=0$.
\end{thm}%%%%%%%%%%%%%%%%%%%%%%%%%%%%%%
\begin{proof}
Under the notations in \eqref{normalWU} and \eqref{c_2m}. It holds that
\begin{align}
(\pi \circ \gamma)(x) =
&\frac{1}{3\sqrt{4{c_0}^2+{a_{02}}^2}}
\{Ax^{3m}+O(x)^{3m+1}\} \bm{b}(0) \nonumber \\
&-\frac{a_{02}}{|a_{02}|}
\{Bx^{3m}+O(x)^{3m+1}\} \bm{n}(0).\nonumber
\end{align}
If $A=0$, then
\[
(\pi \circ \gamma)(x) =
O(x)^{3m+1}\bm{b}(0)
-\frac{a_{02}}{|a_{02}|}(Bx^{3m} +O(x)^{3m+1})\bm{n}(0).
\]
Differentiating $\pi \circ \gamma$, it holds that
\[
(\pi \circ \gamma)'(x) =
O(x)^{3m}\bm{b}(0)
-\frac{3ma_{02}}{|a_{02}|}(Bx^{3m-1} +O(x)^{3m})\bm{n}(0).
\]
We set $E_{\pi} (x) =(\pi \circ \gamma)'(x) / x^{3m-1}$.
Then it holds that
\[
E_{\pi} (0) = -\frac{3ma_{02}B}{|a_{02}|} \bm{n}(0).
\]
This equation implies $(\pi \circ \gamma)(x)$ is tangent to $\bm{n}(0)$ at the origin. If $A \neq 0$ and $B =0$, we see the assertion by the same method.
\end{proof}

\begin{thm}
The curve passing through the Whitney umbrella singularity is tangent to its self-intersecting curve at $x=0$ if and only if $B=0$.
\end{thm}
\begin{proof}
The Whitney umbrella $W$ has the self-intersecting curve passing through the singularity of $W$. First, we shall give a curve which approximates the self-intersecting curve of Whitney umbrella. Let $d(x) :(\mathbb{R} ,0) \to (\mathbb{R}^2 ,0)$ be a curve:
\[
d(x) =
(d_{11}x + d_{12}x^2 + O(x)^3, d_{21}x + d_{22}x^2 + O(x)^3),\quad
d_{11}, d_{12}, d_{21}, d_{22} \in \mathbb{R},
\]
where we assume that $W \circ d(x)$ is the self-intersecting curve of Whitney umbrella.

Here, we consider $W_0(u,v) = (u,uv,v^2)^T$ and $d_0(x) = (0, x)$. Then the self-intersecting curve of $W_0$ is given $W_0 \circ d_0(x) = (0, 0, x^2)^T$. By Fact $\ref{Fact}$, there exist a rotation map $T :(\mathbb{R}^3, 0) \to (\mathbb{R}^3, 0)$ and a diffeomorphism map $\phi :(\mathbb{R}^2 ,0) \to (\mathbb{R}^2, 0)$ such that
\[
W \circ d(x) = T \circ (W_0 \circ d_0 ) \circ \phi (x).
\]
Since ${d_0}'(x) = (0,1) \neq (0,0)$, it holds that $d'(x) = (d_0 \circ \phi)'(x) \neq (0,0)$. Therefore, $(d_{11}, d_{21}) \neq (0,0)$. Since $W \circ d(x)$ is the self-intersecting curve of Whitney umbrella, it satisfies 
\begin{equation}\label{self_rela}
W \circ d(x) = W \circ d(-x).
\end{equation}
Calculating \eqref{self_rela}, we obtain that
\begin{equation}\label{equ_self}
\left(
\begin{array}{c}
	2d_{11}x + O(x)^3 \\
	\displaystyle 2\left(d_{11}d_{22}+d_{12}d_{21}
		+\frac{b_3}{6}{d_{21}}^3\right)x^3 + O(x)^4 \\
	2A_3 x^3 + O(x)^{4}
\end{array}
\right)
=
\left(
\begin{array}{c}
	0 \\
	0 \\
	0
\end{array}
\right).
\end{equation}
where
\begin{align*}
A_3 =
&a_{20}d_{11}d_{12}+a_{11}(d_{11}d_{22}+d_{12}d_{21})
+a_{02}d_{21}d_{22} \\
&+\frac{a_{30}}{6}{d_{11}}^3 +\frac{a_{21}}{2}{d_{11}}^2d_{21}
+\frac{a_{12}}{2}d_{11}{d_{21}}^2+\frac{a_{03}}{6}{d_{21}}^3.
\end{align*}
By the equation \eqref{equ_self}, it holds that $d_{11} =0$. Then since $(d_{11}, d_{21}) \neq (0,0)$, we have $d_{21} \neq 0$. By changing parameterization, we we may assume that $d_{21} = 1$. By the equation \eqref{equ_self}, we have
\[
d_{12} = -\frac{b_3}{6},\quad
d_{22} = \frac{b_3a_{11} -a_{03}}{6a_{02}}.
\]
Then 
\begin{equation}\label{Wd'}
(W \circ d)'(x) =
\left(
\begin{array}{c}
	\displaystyle-\frac{b_3}{3}x +O(x)^2 \\
	0+O(x)^2 \\
	a_{02}x +O(x)^2
\end{array}
\right)
=
\left(
\begin{array}{c}
	\displaystyle-\frac{b_3}{3} +O(x)^1 \\
	0+O(x)^1 \\
	a_{02} +O(x)^1
\end{array}
\right)x.
\end{equation}
And, considering the curve $c_w(x)$ by \eqref{c_2m}, it holds that
\begin{equation}\label{Wc'}
\gamma'(x) =
\left(
\begin{array}{c}
	2mc_0x^{2m-1} +O(x)^{2m} \\
	0 +O(x)^{2m} \\
	ma_{02}x^{2m-1} +O(x)^{2m}
\end{array}
\right)
= m
\left(
\begin{array}{c}
	2c_0 +O(x)^{1} \\
	0 +O(x)^{1} \\
	a_{02}+O(x)^{1}
\end{array}
\right)x^{2m-1}.
\end{equation}
By \eqref{Wd'} and \eqref{Wc'}, if $B= 3c_0 + b_3 /2 =0$, then the curve $\gamma$ is tangent to the self-intersecting curve of $W$ at the origin.
\end{proof}

Let $f:(\mathbb{R}^2,0)\to(\mathbb{R}^3,0)$ be a smooth map and let $\gamma_c:(\mathbb{R},0)\to(\mathbb{R}^3,0)$ be a curve on $f$. We call $\gamma_c$ the {\it contour generator of $f$ by the direction $\bm{v}$} if the unit normal vector field $\bm{n}_c$ of $f$ along $\gamma_c$ satisfies that $\langle \bm{n}_c,\bm{v}\rangle=0$. We consider the $k$-jet germ $j^{(k)}\bm{n}_c(0)$. We call $\gamma_c$ the {\it $k$-approximational contour generator of $f$ by the direction $\bm{v}$} if it holds {that} $\langle j^{(k)}\bm{n}_c(0),\bm{v}\rangle=0$.

\begin{thm}%%%%%%%%%%%%%%%%%%%%%%%%%%%%%
The curve $\gamma$ is {an} $m$-approximational contour generator of $W$ by the direction $\bm{b}(0)$ if and only if $C=0$.
\end{thm}%%%%%%%%%%%%%%%%%%%%%%%%%%%%%%
\begin{proof}
$W$ and $c_w$ are given by \eqref{normalWU} and \eqref{c_2m}. By \eqref{defn}, we can take a unit normal vector $\bm{n} (x)$. If $W \circ c_w$ is a contour generator of $W$ by the direction $\bm{b}(0)$, then it holds that
\[
\langle \bm{n}(x), \bm{b}(0) \rangle= 0.
\]
Then we obtain that
\[
\langle \bm{n}(x), \bm{b}(0) \rangle =
\frac{1}{\sqrt{4{c_0}^2+{a_{02}}^2}}
(2{c_0}^2 +b_3c_0-{a_{02}}^2)x^m+O(x)^{m+1} =0.
\]
If $C=2{c_0}^2 +b_3c_0-{a_{02}}^2=0$, then it holds that $\langle j^{(m)}\bm{n}(x),\bm{b}(0) \rangle=0$.
\end{proof}

\begin{ex}
We set 
{
\[
W_2(u,v)=\left(u,uv+\frac{v^3}{6},-\frac{v^2}{2}\right)^T,\ 
c_{w4}(x)=\left(-\frac{x^2}{6}+\frac{x^3}{6},x\right),\
c_{w5}(x)=\left(\frac{x^2}{2}+\frac{x^3}{6},x\right).
\]
}
We consider the curve $\gamma_4=W_2\circ c_{w4}$ and $\gamma_5=W_2\circ c_{w5}$. Then $\gamma_4$ (respectively, $\gamma_5$) satisfies $B=0$ (respectively, $\gamma_5$).
Figure \ref{k123}(left) is $W_2$ and $\gamma_4$. The curve $\gamma_4$ is tangent the self-intersecting curve of $W_2$. Figure \ref{k123}(center and right) are $W_2$ and $\gamma_5$. Figure \ref{k123}(right) is a projection of $W_2$ onto the vector $\bm{b}(0)$.
\end{ex}
%%%%%%%%%%%%%%%%%%%%%%%%%%%%%%%%%%%%%%%%%%%%%%%%%%%%%%%%%%%%%%%%%%%%%%%%%%
\begin{figure}
\centering
\includegraphics[width=0.8\textwidth]{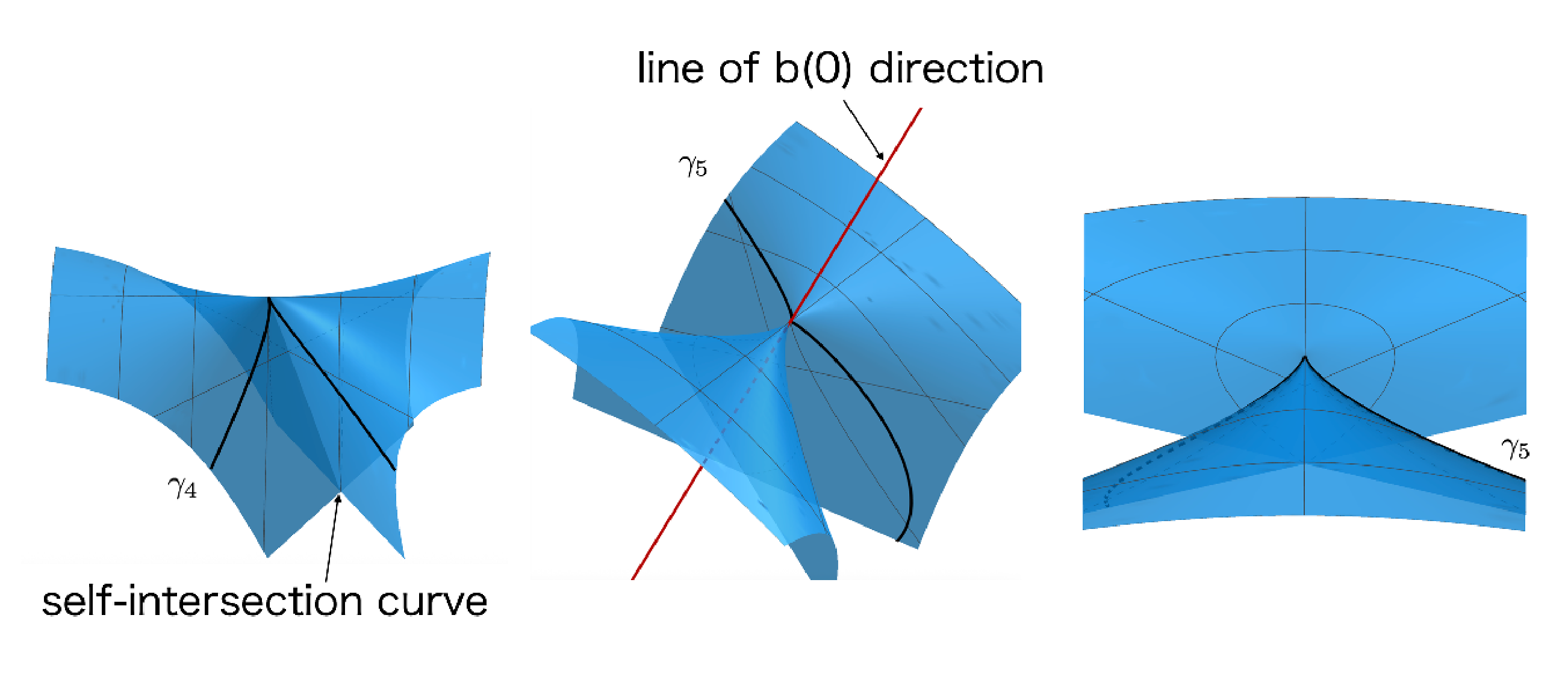}
\caption{Examples of $B=0$ or $C=0$}
\label{k123}
\end{figure}
%%%%%%%%%%%%%%%%%%%%%%%%%%%%%%%%%%%%%%%%%%%%%%%%%%%%%%%%%%%%%%%%%%%%%%%%%

\section{Developable surfaces along a curve passing through Whitney umbrella}%%%%%%%%%%%%%%%%%%%%%%%%%%%%%%%%%%%%%%%%%%

Developable surfaces are classified into cylinders, cones, tangent developable surfaces, and {surfaces obtained by gluing them}. A cylinder is defined by the condition that the direction of the director curve is constant, and a non-cylindrical developable surface is defined by the condition that the derivative of the director curve does not vanish. A cone is defined by the condition that the direction of the striction curve is constant.

We introduce a pseudo-cylindrical developable surface as {a ruled surface} in which the derivative of the director curve has a finite order zero, and similarly define a pseudo-conical developable surface as {a ruled surface} in which the derivative of the striction curve has a finite order zero. A pseudo-cylindrical developable surface can generate developable surfaces that are progressively closer to cylinders, and a pseudo-conical surface can generate surfaces that approach cones.

\subsection{Ruled surfaces and developable surfaces}%%%%%%%%%%%%%%%%%%%%%%%%%%%%%%%%%%%%%%%%%%%%%%%

Let $\boldsymbol\gamma : (\mathbb{R} ,0) \to (\mathbb{R}^3 ,0)$ and $\boldsymbol\xi : (\mathbb{R}, 0) \to \mathbb{R}^3 \setminus \{ 0\}$ be curve-germs. The map $F_{(\boldsymbol\gamma, \boldsymbol\xi)}$ defined by
\begin{equation}\label{ruled}
F_{(\boldsymbol\gamma, \boldsymbol\xi)}(x,y) =
\boldsymbol\gamma (x) + y \boldsymbol\xi (x),
\end{equation}
is called a {\it ruled surface}. We call $\boldsymbol\gamma$ a {\it base curve} of $F_{(\boldsymbol\gamma, \boldsymbol\xi)}$ and $\boldsymbol\xi$ a {\it director curve} of $F_{(\boldsymbol\gamma, \boldsymbol\xi)}$. For a fixed $x_0 \in (\mathbb{R},0)$, the line defined by $\boldsymbol\gamma (x_0) + y \boldsymbol\xi (x_0)$ is called a {\it ruling} of $F_{(\boldsymbol\gamma, \boldsymbol\xi)}$. Using the notation $\bar{\boldsymbol\xi}(x)=\boldsymbol\xi(x)/|\boldsymbol\xi(x)|$, we have $\image F_{(\boldsymbol\gamma, \boldsymbol\xi)} =\image F_{(\boldsymbol\gamma, \bar{\boldsymbol\xi})}$. Therefore, without loss of generality, we may assume that $|\boldsymbol\xi(x)|=1$. A ruled surface is said to be {\it developable} if the Gaussian curvature vanishes on the regular part. It is known (cf., \cite{IT2003}) that a ruled surface $F_{(\boldsymbol\gamma, \boldsymbol\xi)}$ by \eqref{ruled} is developable if and only if
 \begin{equation}\label{dev_rela}
\det(\boldsymbol\gamma'(x), \boldsymbol\xi (x), \boldsymbol\xi '(x)) \equiv 0,
\end{equation}
where $\equiv$ means that equality holds identically. Let $F_{(\boldsymbol\gamma, \boldsymbol\xi)}$ be a ruled surface. If the direction of the director curve $\boldsymbol\xi$ is constant, we call $F_{(\boldsymbol\gamma, \boldsymbol\xi)}$ a {\it cylinder}. Then $F_{(\boldsymbol\gamma, \boldsymbol\xi)}$ is a cylinder if and only if $\boldsymbol\xi '(x) \equiv 0$. If $\boldsymbol\xi '(x) \neq 0$ for any $x \in (\mathbb{R},0)$, $F_{(\boldsymbol\gamma, \boldsymbol\xi)}$ is said to be {\it non-cylindrical}. The ruled surface $F_{(\boldsymbol\gamma, \boldsymbol\xi)}$ is said to be {\it $k$-th pseudo-cylindrical} at the origin if there exist $g:(\mathbb{R},0)\to \mathbb{R}^3$ and  $k \in \mathbb{Z}_{\geq 0}$ such that
\[
\boldsymbol\xi'(x) = g(x)x^k \quad g(0) \neq 0.
\]
By definition, $F_{(\boldsymbol\gamma, \boldsymbol\xi)}$ is $0$-th pseudo-cylindrical if and only if it is non-cylindrical. A map $\bm{s}(x) :(\mathbb{R}, 0) \to (\mathbb{R}^3, 0)$:
\[
\bm{s}(x) =
F_{(\boldsymbol\gamma, \boldsymbol\xi)}(x, y(x)) =
\boldsymbol\gamma (x) + y(x) \boldsymbol\xi (x)
\]
is called a {\it striction curve} if $\bm{s}(x)$ satisfies
\begin{equation}\label{striction_eq}
\langle \bm{s}'(x), \boldsymbol\xi'(x) \rangle\equiv0.
\end{equation}
If $F_{(\boldsymbol\gamma, \boldsymbol\xi)}$ is non-cylindrical, then the striction curve of $F_{(\boldsymbol\gamma, \boldsymbol\xi)}$ is given by
\[
\bm{s}(x) =
\boldsymbol\gamma (x) -
\frac{\langle \boldsymbol\gamma'(x),\boldsymbol\xi'(x) \rangle}
{\langle \boldsymbol\xi'(x), \boldsymbol\xi'(x) \rangle}
\bar{\boldsymbol\xi}(x).
\]
It is known that a singular value of the non-cylindrical ruled surface is located on the striction curve. We say that $F_{(\boldsymbol\gamma, \boldsymbol\xi)}$ is a {\it cone} if it holds that $\bm{s}'(x) \equiv0$. A non-cylindrical ruled surface $F_{(\boldsymbol\gamma, \boldsymbol\xi)}$is said to be {\it $l$-th pseudo-conical} at the origin if there exist $h:(\mathbb{R},0)\to \mathbb{R}^3$ and a positive integer $l$ such that
\[
\bm{s}'(x) = h(x)x^l \quad h(0) \neq 0.
\]
We assume that there exists a striction curve of $F_{(\boldsymbol\gamma, \boldsymbol\xi)}$. We call $F_{(\boldsymbol\gamma, \boldsymbol\xi)}$ a {\it $(k,l)$-ruled surface} if it is $k$-th pseudo-cylindrical and $l$-th pseudo-conical.
%%%%%%%%%%%%%%%%%%%%%%%%%%%%%%%%%%%%%%%%%%%%%%%%%%%%%%%%%%%%%%%%%%%%%%%%%%
\begin{figure}[ht]
\centering
\includegraphics[width=0.8\textwidth]{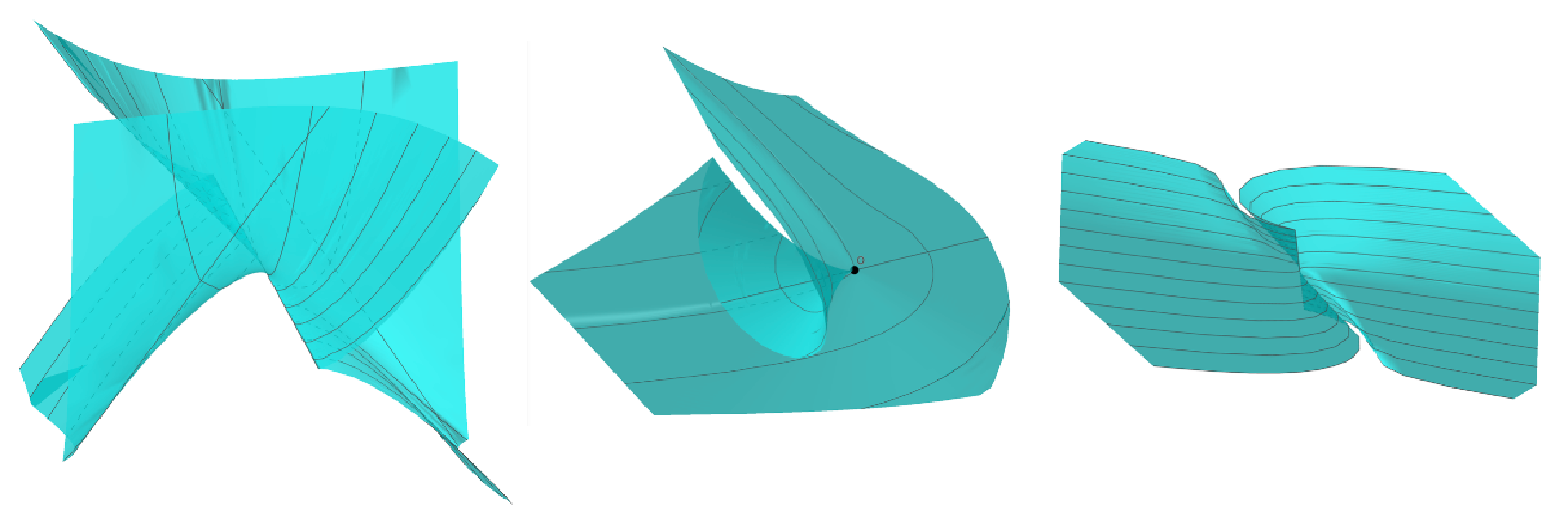}
\caption{A $(1,0)$-ruled surface (left), a $(0,1)$-ruled surface (center), and a $(1,1)$-ruled surface (right)}
\label{pseudo}
\end{figure}
%%%%%%%%%%%%%%%%%%%%%%%%%%%%%%%%%%%%%%%%%%%%%%%%%%%%%%%%%%%%%%%%%%%%%%%%%%

\subsection{{Developable surfaces along a curve passing through Whitney umbrellas}}%%%%%%%%%%%%%%%%%%%%%%%%%%%%%%%%%%%%%%
Let $W$ and $c_w$ be as defined in \eqref{normalWU} and \eqref{c_w} respectively. By \eqref{ebn}, \eqref{frenet}, and {Lemma \ref{k2k3}}, we can take a Darboux frame $\{ \bm{e}(x), \bm{b}(x), \bm{n}(x) \}$ and the function $\kappa_1, \kappa_2, \kappa_3 : (\mathbb{R}, 0) \to \mathbb{R}$ such that
\begin{gather}
\label{condition-1}
\gamma(x)=(W \circ c_w)(x) =
\mathcal{E}(x)x^{\alpha_0}, \ \mathcal{E}(0) \neq 0,\\
\label{condition-2}
\kappa_1(x) = \tilde{\kappa}_1(x)x^{\alpha_1},\
\kappa_2(x) = \tilde{\kappa}_2(x)x^{\alpha_2},\ 
\kappa_3(x) = \tilde{\kappa}_3(x)x^{\alpha_3},
\end{gather}
where $(\tilde\kappa_2(0),\tilde{\kappa}_3(0)) \neq (0,0)$, $\alpha_i \in \mathbb{Z}_{\geq 0}$ $(i = 0,1,2,3)$, $\mathcal{E} : (\mathbb{R}, 0) \to \mathbb{R}^3$, and $\tilde\kappa_i : (\mathbb{R} ,0) \to \mathbb{R}$ $(i=1,2,3)$. Then we may assume that
\begin{enumerate}
\item if $\alpha_3>\alpha_2$, then $\tilde\kappa_2(0)\ne0$,
\item if $\alpha_2>\alpha_3$, then $\tilde\kappa_3(0)\ne0$.
\end{enumerate}

In \cite{IO2015}, an osculating developable surface along a curve on a surface with a Darboux frame is defined. Following this construction, we define a map $OD_w : (\mathbb{R}^2, 0) \to (\mathbb{R}^3, 0)$ by
\[
OD_w (x,y) = \gamma(x) + yD_o(x),
\]
where
\[
D_o(x) =
\begin{cases}
	\displaystyle\frac{\tilde\kappa_3(x)\bm{e}(x)
	-\tilde\kappa_2(x)x^{\alpha_2-\alpha_3}\bm{b}(x)}
	{\sqrt{\tilde\kappa_2(x)^2x^{2(\alpha_2-\alpha_3)}
	+\tilde\kappa_3(x)^2}}
	\quad (\text{if}\ \alpha_2 >\alpha_3), \vspace{4mm} \\
	\displaystyle\frac{\tilde\kappa_3(x)x^{\alpha_3-\alpha_2}\bm{e}(x)
	-\tilde\kappa_2(x)\bm{b}(x)}
	{\sqrt{\tilde\kappa_2(x)^2
	+\tilde\kappa_3(x)^2x^{2(\alpha_3-\alpha_2)}}}
	\quad (\text{if}\ \alpha_3 \geq \alpha_2).
\end{cases}
\]
The surface $OD_w$ is a ruled surface and tangent to $W$ along $\gamma$. Define
\begin{align}\label{delta}
\delta =
\begin{cases}
	\tilde\kappa_1x^{\alpha_1}
	({\tilde{\kappa}_2}^2 x^{2(\alpha_2 -\alpha_3)} +{\tilde{\kappa}_3}^2)
	+{\tilde{\kappa}_2} x^{\alpha_2 -\alpha_3}{\tilde{\kappa}_3}'
	- ({\tilde{\kappa}_2} x^{\alpha_2 -\alpha_3})'{\tilde{\kappa}_3}
	\quad (\text{if}\ \alpha_2 >\alpha_3), \\
	\tilde\kappa_1x^{\alpha_1}
	({\tilde{\kappa}_2}^2 +{\tilde{\kappa}_3}^2x^{2(\alpha_3-\alpha_2)})
	+\tilde\kappa_2(\tilde\kappa_3x^{\alpha_3-\alpha_2})'
	- {\tilde\kappa_2}'\tilde\kappa_3x^{\alpha_3-\alpha_2}
	\quad (\text{if}\ \alpha_3 \geq \alpha_2),
\end{cases}
\end{align}
we have
\begin{equation}\label{D_o'}
{D_o}' = 
\begin{cases}
	\displaystyle\frac{\delta}
	{\{ {\tilde\kappa_2}^2 x^{2(\alpha_2-\alpha_3)}+{\tilde\kappa_3}^2\}
	^{3/2}} (\tilde{\kappa}_2 x^{\alpha_2 -\alpha_3} \bm{e}
	+\tilde{\kappa} _3  \bm{b})
	\quad (\text{if}\ \alpha_2 >\alpha_3), \\
	\displaystyle\frac{\delta}
	{\{ {\tilde\kappa_2}^2+{\tilde\kappa_3}^2x^{2(\alpha_3-\alpha_2)}\}
	^{3/2}} (\tilde{\kappa}_2\bm{e}
	+\tilde{\kappa} _3x^{\alpha_3-\alpha_2}\bm{b})
	\quad (\text{if}\ \alpha_3 \geq \alpha_2).
\end{cases}
\end{equation}
Here and in what follows, we omit $(x)$ for functions of variable $x$. By \eqref{D_o'}, we have $\det (\gamma', D_o, {D_o}') \equiv0$. From this, we see that $OD_w$ is developable. We call $OD_w$ an {\it osculating developable surface} of $W$ along $\gamma$. By \eqref{D_o'}, $OD_w$ is $k$-th pseudo-cylindrical if and only if there exists $\tilde{\delta} : (\mathbb{R}, 0) \to \mathbb{R}$ such that
\[
\delta(x) = \tilde\delta(x) x^k \quad \tilde\delta(0) \neq 0.
\]

\begin{prop}
If $OD_w$ is $k$-th pseudo-cylindrical, and satisfies that
\[
\begin{cases}
\alpha_0+\alpha_2-\alpha_3-1 \geq k \quad &(\rm{if}\ \alpha_2>\alpha_3),\\
\alpha_0-1 \geq k \quad &(\rm{if}\ \alpha_3 \geq \alpha_2),
\end{cases}
\]
then there exists a striction curve of $OD_w$.
\end{prop}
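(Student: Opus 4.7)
The plan is to produce the striction curve in the form $\mathbf{s}(x)=(W\circ c_w)(x)+y(x)D_o(x)$ by solving the striction equation $\langle \mathbf{s}'(x),D_o'(x)\rangle=0$ for the scalar $y(x)$. Since $D_o$ is already a unit vector, $\langle D_o,D_o'\rangle=0$, so the striction equation simplifies to
\[
y(x) = -\frac{\langle (W\circ c_w)'(x),\,D_o'(x)\rangle}{|D_o'(x)|^{2}}.
\]
Away from $x=0$ this formula is unproblematic; the whole issue is whether the right-hand side extends across $x=0$, where $D_o'$ vanishes by the $k$-th pseudo-cylindrical hypothesis. Thus the proof reduces to a counting-of-orders argument comparing the vanishing orders of numerator and denominator.

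For the denominator, I plug $\delta(x)=\tilde\delta(x)x^{k}$ with $\tilde\delta(0)\neq 0$ into \eqref{D_o'}. In the case $\alpha_2>\alpha_3$, the bracketed vector $\tilde\kappa_2 x^{\alpha_2-\alpha_3}\mathbf{e}+\tilde\kappa_3\mathbf{b}$ has squared length $\tilde\kappa_2^2 x^{2(\alpha_2-\alpha_3)}+\tilde\kappa_3^2$, which matches the bracket under the $3/2$-power in \eqref{D_o'}, so one gets $|D_o'|^2=\delta^2/(\tilde\kappa_2^2 x^{2(\alpha_2-\alpha_3)}+\tilde\kappa_3^2)^{2}$; similarly in the case $\alpha_3\ge\alpha_2$. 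The scalar denominator is smooth and nonvanishing at $x=0$ thanks to $\tilde\kappa_3(0)\neq 0$ (resp.\ $\tilde\kappa_2(0)\neq 0$) from \eqref{condition-1}, so $|D_o'|^{2}$ vanishes to order exactly $2k$.

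For the numerator, I use that $(W\circ c_w)(x)=\mathcal{E}(x)x^{\alpha_0}$ with $\mathcal{E}(0)\neq0$ gives $(W\circ c_w)'(x)=x^{\alpha_0-1}\mathcal{E}^{*}(x)$ with $\mathcal{E}^{*}(0)\neq 0$, and this tangent vector is parallel to $\mathbf{e}(x)$ and hence orthogonal to $\mathbf{b}(x)$. Therefore $\langle (W\circ c_w)',D_o'\rangle$ picks up only the $\mathbf{e}$-component of $D_o'$. From \eqref{D_o'} this $\mathbf{e}$-coefficient is $\delta\,\tilde\kappa_2 x^{\alpha_2-\alpha_3}/(\cdots)^{3/2}$ when $\alpha_2>\alpha_3$, and $\delta\,\tilde\kappa_2/(\cdots)^{3/2}$ when $\alpha_3\ge\alpha_2$; combined with the $x^{\alpha_0-1}$ from $(W\circ c_w)'$, the numerator vanishes to order exactly $\alpha_0-1+k+\alpha_2-\alpha_3$ and $\alpha_0-1+k$ respectively (using that $\tilde\kappa_2(0)\neq 0$, so no further cancellation occurs).

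Dividing, $y(x)$ vanishes to order $\alpha_0+\alpha_2-\alpha_3-1-k$ in the first case and $\alpha_0-1-k$ in the second. The hypotheses are precisely that these exponents are nonnegative, so Hadamard's lemma produces a $C^{\infty}$ function $y$ on a neighborhood of $0$ extending the formula, and then $\mathbf{s}(x)=(W\circ c_w)(x)+y(x)D_o(x)$ is a smooth curve satisfying the striction condition, completing the proof. The main bookkeeping obstacle is keeping the two cases $\alpha_2>\alpha_3$ and $\alpha_3\geq\alpha_2$ separate and confirming that no hidden cancellation takes place in the $\mathbf{e}$-component of $D_o'$; this cleanly reduces to checking that $\tilde\kappa_2(0)$, $\tilde\kappa_3(0)$ and $\tilde\delta(0)$, all of which appear in denominators or in top terms of the numerator, are nonzero, and all three are built into \eqref{condition-1} and the pseudo-cylindrical hypothesis.
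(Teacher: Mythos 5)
Your proposal is correct and follows essentially the same route as the paper: both solve the striction equation $\langle \mathbf{s}',D_o'\rangle=0$ for the scalar coefficient of $D_o$ (your $y$ is the paper's $-S$) and observe that the stated inequalities are exactly the condition that this coefficient, of order $\alpha_0+\alpha_2-\alpha_3-1-k$ resp.\ $\alpha_0-1-k$ in $x$, extends smoothly through $x=0$. Your explicit numerator/denominator order count just fills in the paper's ``straightforward calculation'' leading to its formula for $S(x)$.
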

\begin{proof}
Let $OD_w$ be $k$-th pseudo-cylindrical, and $\bm{s}_w : (\mathbb{R},0) \to \mathbb{R}^3$ be a map
\[
\bm{s}_w(x)=OD_w(x,S(x))=\gamma(x) +S(x)D_o(x).
\]
By the equation \eqref{striction_eq}, if $\bm{s}_w$ is a striction curve, then it holds that
\[
\langle \gamma' +S'D_o +S {D_o}' , {D_o}' \rangle =0.
\]
By a straightforward calculation, it holds that 
\[
S(x) = 
\begin{cases}
	-|\mathcal{E}|\tilde\kappa_2
	\sqrt{{\tilde\kappa_2}^2 x^{2(\alpha_2-\alpha_3)}
	+{\tilde\kappa_3}^2}
	x^{\alpha_0+\alpha_2-\alpha_3-k-1}/{\tilde\delta}
	\quad &(\text{if}\ \alpha_2 >\alpha_3),  \\
	-|\mathcal{E}|\tilde\kappa_2
	\sqrt{{\tilde\kappa_2}^2
	+{\tilde\kappa_3}^2 x^{2(\alpha_3-\alpha_2)}}
	x^{\alpha_0-k-1}/{\tilde\delta}
	\quad &(\text{if}\ \alpha_3 \geq \alpha_2).
\end{cases}
\]
If the {degree} of $S(x)$ is zero or positive, we can define $\bm{s}_w$ at $x=0$. 
\end{proof}
Let $OD_w$ be $k$-th pseudo-cylindrical and {let} $\bm{s}_w$ be the striction curve of $OD_w$. If $OD_w$ satisfies
\[
\begin{cases}
\alpha_0+\alpha_2-\alpha_3-1 > k \quad &(\text{if}\ \alpha_2>\alpha_3),\\
\alpha_0-1 > k \quad &(\text{if}\ \alpha_3 \geq \alpha_2),
\end{cases}
\]
then $\bm{s}_w(0)=(0,0,0)^T$. That is, $\bm{s}_w$ passes through the Whiney umbrella singularity. Hence we consider this case. Then we obtain
\begin{equation}\label{s'}
{\bm{s}_w}'= \sigma D_o,
\end{equation}
where
\begin{equation}\label{sigma}
\sigma = 
\begin{cases}
\displaystyle
\frac{|\mathcal{E}| \tilde{\kappa}_3 x^{\alpha_0-1}}{\sqrt{{\tilde{\kappa}_2}^2 x^{2(\alpha_2 -\alpha_3)} +{\tilde{\kappa}_3}^2}}+S'
\quad (\text{if}\ \alpha_2>\alpha_3), \vspace{4mm}\\
\displaystyle
\frac{|\mathcal{E}| \tilde{\kappa}_3 x^{\alpha_0+\alpha_3-\alpha_2-1}}
{\sqrt{{\tilde{\kappa}_2}^2
+{\tilde{\kappa}_3}^2x^{2(\alpha_3 -\alpha_2)}}}+S'
\quad (\text{if}\ \alpha_3 \geq \alpha_2).
\end{cases}
\end{equation}
By \eqref{s'}, $OD_w$ is $l$-th pseudo-conical if and only if there exists $\tilde{\sigma} : (\mathbb{R}, 0) \to \mathbb{R}$ such that
\[
\sigma(x) = \tilde\sigma(x) x^l \quad \tilde\sigma(0) \neq 0.
\]
That is, if the degree of $\delta$ is $k$ and the degree of $\sigma$ is $l$, then $OD_w$ is a $(k,l)$-developable surface.

\subsection{Degrees and top-terms of $\delta$ and $\sigma$}%%%%%%%%%%%%%%%%%%%%%%%%%%%%%%%%%%%%%%%%%%%%
The functions $\delta$ and $\sigma$ are invariants of a developable surface. It is known that these invariants have geometric meanings in classifying singular points of the developable surface \cite{ISTa2017, ISTe2022}. Therefore, the degrees and top-terms of $\delta$ and $\sigma$ may have geometric meanings of the Whitney umbrella. We calculate the degrees and the top-terms. Moreover, we provide conditions under which the top-terms vanish. We assume the conditions \eqref{condition-1} and \eqref{condition-2}. 

We set the following three conditions.
\begin{enumerate}
\item [(i)] $\alpha_1<|\alpha_3-\alpha_2|-1$,
\item [(ii)] $\alpha_1=|\alpha_3-\alpha_2|-1$,
\item [(iii)] $\alpha_1 > |\alpha_3-\alpha_2|-1$.
\end{enumerate}

If $\alpha_3 > \alpha_2$ holds, then, from \eqref{delta}, it follows that
\[
\delta = 
\begin{cases}
\tilde\delta x^{\alpha_1}, \quad 
\tilde\delta=\tilde\kappa_1\tilde\kappa_2^2+O(x)
&\text{if}\ (\mathrm{i})\ \text{holds},\\
\tilde\delta x^{\alpha_1}, \quad
\tilde\delta=\tilde\kappa_2
(\tilde\kappa_1\tilde\kappa_2-(\alpha_3 -\alpha_2)\tilde\kappa_3)+O(x)
&\text{if}\ (\mathrm{ii})\ \text{holds}, \\
\tilde\delta x^{\alpha_3 -\alpha_2}, \quad 
\tilde\delta=-(\alpha_3 -\alpha_2)\tilde\kappa_2\tilde\kappa_3+O(x)
&\text{if}\ (\mathrm{iii})\ \text{holds}.
\end{cases}
\]
We assume $\tilde\delta(0) \neq 0$. Then, from \eqref{sigma}, we have
\[
\sigma =
\begin{cases}
\tilde\sigma x^{\alpha_0-\alpha_1-2}, \quad
\tilde\sigma=(\alpha_0-\alpha_1-1)
|\mathcal{E}||\tilde\kappa_2|\tilde\kappa_2/\tilde\delta +O(x)
&\text{if}\ (\mathrm{i})\ \text{holds},\\
\tilde\sigma x^{\alpha_0-\alpha_1-2}, \quad
\tilde\sigma =(\alpha_0-\alpha_1-1)
|\mathcal{E}||\tilde\kappa_2|\tilde\kappa_2/\tilde\delta +O(x)
&\text{if}\ (\mathrm{ii})\ \text{holds},\\
\tilde\sigma x^{\alpha_0-\alpha_3+\alpha_2-2}, \quad
\tilde\sigma=(\alpha_0-\alpha_3+\alpha_2-1)
|\mathcal{E}||\tilde\kappa_2|\tilde\kappa_2/\tilde\delta +O(x)
&\text{if}\ (\mathrm{iii})\ \text{holds}.
\end{cases}
\]

If $\alpha_2 = \alpha_3$ holds, then it follows that
\[
\delta =
\tilde\kappa_1x^{\alpha_1}
({\tilde{\kappa}_2}^2 +{\tilde{\kappa}_3}^2)
+{\tilde{\kappa}_2}{\tilde{\kappa}_3}'
- {\tilde{\kappa}_2}'{\tilde{\kappa}_3}.
\]
There exists an integer $k$ such that
\[
0 \le k \le \alpha_1, \quad
\delta = \tilde\delta x^k, \quad
\tilde\delta(0) \neq 0.
\]
We assume that $\tilde\delta(0) \neq 0$. We have
\[
\sigma =
\tilde\sigma x^{\alpha_0-k-2}, \quad
\tilde\sigma = (\alpha_0-k-1)
|\mathcal{E}||\tilde\kappa_2|\tilde\kappa_2/\tilde\delta +O(x).
\]
By the case of $\alpha_3>\alpha_2$ and $\alpha_3=\alpha_2$, the top-term of $\sigma$ does not vanish.

If $\alpha_2 > \alpha_3$ holds, then it follows that
\[
\delta = 
\begin{cases}
\tilde\delta x^{\alpha_1}, \quad 
\tilde\delta=\tilde\kappa_1\tilde\kappa_3^2+O(x)
&\text{if}\ (\mathrm{i})\ \text{holds},\\
\tilde\delta x^{\alpha_1}, \quad
\tilde\delta=\tilde\kappa_3
(\tilde\kappa_1\tilde\kappa_3-(\alpha_2 -\alpha_3)\tilde\kappa_2)+O(x)
&\text{if}\ (\mathrm{ii})\ \text{holds}, \\
\tilde\delta x^{\alpha_2 -\alpha_3}, \quad 
\tilde\delta=-(\alpha_2 -\alpha_3)\tilde\kappa_2\tilde\kappa_3+O(x)
&\text{if}\ (\mathrm{iii})\ \text{holds}.
\end{cases}
\]
We assume $\tilde\delta(0) \neq 0$. We have
\[
\sigma =
\begin{cases}
\tilde\sigma x^{\alpha_0-1}, \quad
\tilde\sigma(0)=\tilde\kappa_3|\mathcal{E}|/|\tilde\kappa_3|+O(x)
&\text{if}\ (\mathrm{i})\ \text{holds},\\
\tilde\sigma x^{\alpha_0-1}, \quad
\tilde\sigma ={|\mathcal{E}||\tilde\kappa_3|}
\{ \tilde\kappa_1\tilde\kappa_3 
-(\alpha_0+\alpha_2-\alpha_3)\tilde\kappa_2\}/{\tilde\delta}+O(x)
&\text{if}\ (\mathrm{ii})\ \text{holds},\\
\tilde\sigma x^{\alpha_0-2}, \quad
\tilde\sigma=(\alpha_0-1)
|\mathcal{E}||\tilde\kappa_3|\tilde\kappa_2/\tilde\delta +O(x)
&\text{if}\ (\mathrm{iii})\ \text{holds}.
\end{cases}
\]
We assume the condition $(\mathrm{ii})$. Then the top-terms of $\delta$ and $\sigma$ are given by the following coefficients.
\[
E=\tilde\kappa_1\tilde\kappa_3-(\alpha_2 -\alpha_3)\tilde\kappa_2,
\]
\[
F=\tilde\kappa_1\tilde\kappa_3-(\alpha_0+\alpha_2-\alpha_3)\tilde\kappa_2.
\]
If $E=0$ (respectively, $F=0$), then the top-term of $\delta$ (respectively, $\sigma$) vanishes.

\begin{ex}
We give an example which may have the relation $E =0$ and $F =0$. We consider the following condition:
\[
c_w(x) =
(c(x)x^{2m}, x^m),\ 
c(x) = c_0 +c_mx^m +O(x)^{m+1},
\]
\[
A = 6a_{11} c_0 ^2 +a_{03}c_0 -3a_{02} c_m,\ 
B = 3c_0 +\frac{b_3}{2} =0,
\]
\[
C = -4c_0 ^2-a_{02} ^2 \neq 0,\ 
D=(a_{11}b_3-a_{03})c_0 -5a_{02}c_m -\frac{b_4 a_{02}}{3}.
\]
Then it holds
\begin{align}
&\kappa_1 = \tilde\kappa_1x^{m-1}, \quad
\tilde\kappa_1= \frac{m^3a_{02}}{|\mathcal{E}|^2|\mathcal{N}|}A +O(x),
\nonumber \\
&\kappa_2 = \tilde\kappa_2 x^{2m-1}, \quad
\tilde\kappa_2= \frac{m^3}{|\mathcal{E}||\mathcal{N}|}D +O(x),
\nonumber \\
&\kappa_3 = \tilde\kappa_3 x^{m-1}, \quad
\tilde\kappa_3= \frac{m^3a_{02}}{|\mathcal{E}||\mathcal{N}|^2}C +O(x).
\nonumber
\end{align}
This satisfies the condition $(\mathrm{ii})$ (that is, $\alpha_1 = \alpha_2 -\alpha_3 -1$).
Calculating $E$ and $F$, for $x=0$, we obtain
\[
E=\frac{2m^3}{|\mathcal{E}||\mathcal{N}|}
( 6a_{11} c_0 ^2 +a_{03}c_0 +a_{02} c_m +\frac{b_4 a_{02}}{6}),
%=\frac{4m^3}{|\mathcal{E}||\mathcal{N}|}(A+4a_{02}c_m),
\]
\[
F=\frac{3m^3}{|\mathcal{E}||\mathcal{N}|}
( 24a_{11} c_0 ^2 +4a_{03}c_0 +12a_{02} c_m +b_4 a_{02}).
%=\frac{4m^3}{|\mathcal{E}||\mathcal{N}|}(A+6a_{02}c_m).
\]
%We obtain that $E$ (respectively, $F$) satisfies zero if and only if it holds $A=-4a_{02}c_m$ (respectively, $A=-6a_{02}c_m$). Since The formula $A$ is the top-term of $\kappa_1$, the top-terms of $\delta$ and $\sigma$ depend on the top-term of $\kappa_1$.
\end{ex}

\end{document}